 % Latex file

\documentclass[11pt]{amsart}
\usepackage{amsmath}
\usepackage{amssymb}
\usepackage{amsthm}
\usepackage{amscd}
\usepackage[all]{xy}

\textheight 8.7in
\textwidth=15cm \hoffset=-1.8cm

\numberwithin{equation}{section}

\newcounter{AbcT}

\newtheorem {Theorem}    {Theorem}[section]

\newtheorem {Lemma}      [Theorem]    {Lemma}

\newtheorem {Proposition}[Theorem]    {Proposition}

\theoremstyle{remark}

\newtheorem {Definition} [Theorem]    {\bf{Definition}}
\def\acts{\curvearrowright}

\newcommand{\Cay}{\mathit{Cay}}

%  ========================
%  COUNTER FOR BIBLIOGRAPHY
%  ========================

\newcounter{DM@bibnum}

%  =========
%  QED SIGNS
%  =========

\newcommand{\la}{\langle}
\newcommand{\ra}{\rangle}

\def\deg{{\rm deg\,}}

\def\NSL_2{{\mathcal N SL_2}}

%Greek letters

\def\phi{\varphi}

%Calligraphic roman letters

\def\calT{{\mathcal T}}

%roman letters with a tilde

%roman letters with a bar

\def\hbar{\bar h}

%roman letters with a hat

%Euler Fracture letters

%Capital roman double letters

\newcommand{\cost}{\text{cost}}

\newcommand{\sub}{\subseteq}

%%%%%%%%%%%%%%%%%%%%%%%%%%%%%%%%%%%%%%%%%%%%%%%%%%%%%%%%%%%%%%%%%%%%%%%%
\begin{document}

\title{Groups with Tarski number $5$}
\author{Gili Golan}
\address{Bar-Ilan University}
\email{gili.golan@math.biu.ac.il}

\subjclass[2000]{Primary 43A07, 20F65
Secondary 20E18, 20F05, 20F50}
\keywords{Tarski number, paradoxical decomposition, random spanning forest}

\begin{abstract}
The Tarski number of a non-amenable group $G$ is the minimal number of pieces
in a paradoxical decomposition of $G$. Until now the only numbers which were known to be Tarski numbers of some groups were $4$ and $6$. We construct a group with Tarski number $5$ and mention a related result for Tarski numbers of group actions.
\end{abstract}
\maketitle

\section{Introduction}
Recall the definition of a \emph{paradoxical decomposition} of a group.

\begin{Definition}\rm
\label{d1}

A group $G$ admits a \emph{paradoxical decomposition}
if there exist positive integers $m$ and $n$, disjoint subsets $P_1,\ldots, P_m,Q_1,\ldots, Q_n$ of $G$
and subsets $S_1=\{g_1,\ldots, g_m\}$, $S_2=\{h_1,\ldots, h_n\}$ of $G$ such that
\begin{equation}\label{eq1}G=\bigcup_{i=1}^m g_i P_i=\bigcup_{j=1}^n h_j Q_j.\end{equation}
The sets $S_1,S_2$ are called the \emph{translating sets} of the paradoxical decomposition.
\end{Definition}
It is well known \cite{Wa} that $G$ admits a paradoxical decomposition if and only if it is non-amenable.
The minimal possible value of $m+n$ in a paradoxical decomposition of $G$
is called the \emph{Tarski number} of $G$ and denoted by $\mathcal T(G)$.
 
It is clear that for any paradoxical decomposition we must have $m\geq 2$ and $n\geq 2$,
so the minimal possible value of Tarski number is $4$. By a theorem of J\'onsson and Dekker (see, for example, \cite[Theorem~5.8.38]{Sa}),  $\calT(G)=4$ if and only if $G$ contains a non-abelian free subgroup. 

Recently it was proved that the set of Tarski numbers is infinite \cite{OS}. At the time, no specific number other than $4$ was known to be a Tarski number. The first and only result of the kind appeared shortly afterwards in \cite{EGS}, where a group with Tarski number $6$ was constructed. The main feature of the proof was the use of random spanning forests on Cayley graphs. We shall use similar techniques to construct a group with Tarski number $5$. 

The notion of paradoxical decompositions and Tarski numbers naturally extends to group actions (see, for example, \cite{Wa}). In this more general setting the problem of determining whether a given number is a Tarski number can be completely resolved. Indeed, for every $n\ge 4$ it is possible to construct a faithful transitive group action with Tarski number $n$. The proof and related results about Tarski numbers of group actions will appear at a future paper %IYH.

\vskip .12cm
{\bf Acknowledgments.} The author would like to thank Mikhail Ershov and Andrei Jaikin Zapirain for useful discussions and Mark Sapir for useful discussions and comments on the text. Part of the research was done during the author's stay at the University of Virginia. She wishes to express her gratitude for the accommodations and hospitality.

\section{Groups with Tarski number $5$}

In what follows we shall make use of the following criterion which follows from Lemma 2.5 and Theorem 2.6 in \cite{EGS}.

\begin{Lemma}\label{lemma}
Let $G$ be a group and $S_1,S_2$ finite subsets of $G$. The following are equivalent.
\begin{enumerate}
\item $G$ has a paradoxical decomposition with translating sets $S_1,S_2$.
\item For any pair of finite subsets $A_1,A_2\in G$, $|A_1S_1\cup A_2S_2|\ge |A_1|+|A_2|.$
\end{enumerate}
\end{Lemma}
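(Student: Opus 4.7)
My plan is to translate the paradoxical decomposition into a matching problem and invoke a Hall-type marriage criterion.

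First, I would establish that (1) is equivalent to the existence of two injections $\phi_1, \phi_2 \colon G \to G$ with disjoint images such that $\phi_k(x) \in S_k^{-1} x$ for every $x \in G$ and $k \in \{1,2\}$. Given a decomposition $G = \bigsqcup_i g_i P_i = \bigsqcup_j h_j Q_j$, one sets $\phi_1(x) = g_i^{-1} x$ whenever $x \in g_i P_i$, and similarly for $\phi_2$. The disjointness of the $P_i$'s forces $\phi_1$ to be injective, and the disjointness of all the $P_i, Q_j$ forces the images of $\phi_1, \phi_2$ to be disjoint. Conversely, any such pair of injections recovers a paradoxical decomposition by grouping values of $\phi_k$ according to which element of $S_k^{-1}$ realizes the membership $\phi_k(x) \in S_k^{-1} x$; this is essentially the content of Lemma 2.5 of \cite{EGS}.

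Second, I would encode such pairs of injections as matchings in the bipartite graph $\Gamma$ with left part $L = G \times \{1,2\}$, right part $R = G$, and an edge $(x, k) \sim z$ exactly when $z \in S_k^{-1} x$. A matching saturating $L$ is precisely a pair $(\phi_1, \phi_2)$ of the required form. Since $S_1$ and $S_2$ are finite, $\Gamma$ is locally finite on $L$, so the Hall-Rado extension of the marriage theorem applies: such a matching exists if and only if $|N(F)| \ge |F|$ for every finite $F \subseteq L$. Taking $F = (A_1 \times \{1\}) \cup (A_2 \times \{2\})$ with $A_1, A_2 \subseteq G$ finite gives $N(F) = S_1^{-1} A_1 \cup S_2^{-1} A_2$, so Hall's condition becomes $|S_1^{-1} A_1 \cup S_2^{-1} A_2| \ge |A_1| + |A_2|$; this is essentially Theorem 2.6 of \cite{EGS}.

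Finally, I would reconcile this with (2) via the inversion bijection $x \mapsto x^{-1}$ on $G$. Since $(S_k^{-1} A_k)^{-1} = A_k^{-1} S_k$ and inversion preserves cardinalities, substituting $A_k \mapsto A_k^{-1}$ transforms the Hall condition into $|A_1 S_1 \cup A_2 S_2| \ge |A_1| + |A_2|$, which is (2). The main technical subtlety will be invoking Hall's theorem when $G$ is uncountable: local finiteness on $L$ reduces this to a standard compactness or Zorn-style argument. All remaining work is routine bookkeeping between the combinatorial matching and the group-theoretic partition.
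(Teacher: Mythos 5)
Your argument is correct, and it is essentially the paper's own route: the paper does not prove the lemma but derives it directly from Lemma 2.5 and Theorem 2.6 of \cite{EGS}, which is exactly the injection/Hall--Rado machinery you reconstruct (together with the harmless inversion $A_k\mapsto A_k^{-1}$ to pass between the left- and right-translate conventions). The only nitpick is that the translates $g_iP_i$ need not be pairwise disjoint (only the pieces $P_i,Q_j$ are), so defining $\phi_1$ requires choosing one index per element, which your injectivity argument already handles.
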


\begin{Proposition}
\label{prop:Betti}
Let $G$ be a group generated by $S=\{a,b,c\}$ and assume that $a$ is an element of infinite order. If $\cost(G)\ge 2.5$ then $G$ has a paradoxical decomposition with translating sets $S_1=\{1,a\},S_2=\{1,b,c\}$. 
\end{Proposition}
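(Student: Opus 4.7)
My plan is to invoke Lemma \ref{lemma} and reduce the proposition to establishing
$$|A_1 S_1 \cup A_2 S_2| \ge |A_1| + |A_2|$$
for every pair of finite sets $A_1, A_2 \subseteq G$. It is convenient to interpret this via the bipartite graph on $(A_1 \sqcup A_2) \cup (A_1 S_1 \cup A_2 S_2)$ in which each $v \in A_1$ is joined to $\{v, va\}$ and each $v \in A_2$ to $\{v, vb, vc\}$; the inequality, for all pairs $(A_1,A_2)$, is equivalent by K\"onig--Hall to the existence of a matching saturating the left side.

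My approach would follow the blueprint of \cite{EGS} for the Tarski number $6$ case. I would argue by contradiction: assume the Hall inequality fails for some $A_1, A_2$. From this failure I would construct a $G$-invariant random spanning subgraph $\omega$ of $\Cay(G, \{a,b,c\})$ whose edge-cost
$$\sum_{s \in \{a, b, c\}} \Prob\bigl(\{1_G, s\} \in \omega\bigr)$$
is strictly less than $2.5$. Such an $\omega$ converts into a cost-$<2.5$ generating graphing of the Bernoulli shift action of $G$, contradicting $\cost(G) \ge 2.5$.

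The construction of $\omega$ from the defect would use a $G$-invariant random tiling of $G$ by translates of $A_1 \cup A_2$, produced from a random spanning forest as in \cite{EGS}. In each tile $g(A_1 \cup A_2)$, I would include the $|A_1|+2|A_2|$ edges $\{gx, gxa\}$ for $x \in A_1$ and $\{gy, gyb\},\{gy, gyc\}$ for $y \in A_2$, together with a vanishingly small density of bridge edges linking neighboring tiles. The Hall defect $|A_1|+|A_2|-|A_1 S_1 \cup A_2 S_2|>0$ would then permit one to prune the chosen edges within each tile (via a matching argument on the local bipartite graph described above) so that the surviving edge-cost per tile is strictly below $2.5 \cdot |A_1 \cup A_2|$. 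The hypothesis that $a$ has infinite order enters here to guarantee aperiodicity of the $\la a \ra$-action, which is what lets the random spanning forest of \cite{EGS} produce an ergodic tiling of the required shape.

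The main obstacle will be the quantitative bookkeeping in that last step: converting the combinatorial defect on a single finite pair $(A_1,A_2)$ into a uniform per-vertex edge-cost reduction via unimodular mass transport, while controlling the bridge contribution. The asymmetry $|S_1| = 2$, $|S_2| = 3$ is precisely what fixes the threshold at $2.5 = (|S_1|+|S_2|)/2$, paralleling the threshold $3$ in \cite{EGS} where both translating sets had size $3$.
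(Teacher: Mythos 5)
Your reduction via Lemma \ref{lemma} matches the paper, but the core of your argument has a genuine gap. The entire weight of your proof rests on producing, from a single failing pair $(A_1,A_2)$, a $G$-invariant random tiling of $G$ by translates of $A_1\cup A_2$ and then pruning edges tile-by-tile to get a generating graphing of cost strictly below $2.5$. No such tiling construction exists in \cite{EGS} (their random forests are used for a direct counting argument, not to tile), and in general an arbitrary finite subset of a group does not tile the group, nor does a random spanning forest yield a tiling of a prescribed shape. Even granting some quasi-tiling, the step you defer as ``quantitative bookkeeping'' is precisely the missing proof: the Hall defect $|A_1|+|A_2|-|A_1S_1\cup A_2S_2|>0$ is a statement about the size of a union of translates, and it does not obviously convert into the per-tile edge-count bound $<2.5\,|A_1\cup A_2|$ that you need, especially once the ``bridge'' edges required for the graphing to generate the orbit equivalence relation are added. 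You have also misplaced the role of the hypothesis that $a$ has infinite order: it is not about aperiodicity of a tiling, but about the fact that the $a^{\pm1}$-labeled edges of the Cayley graph form a $G$-invariant forest, which is what allows one to force them into an invariant random spanning forest.

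The paper's proof is direct, not by contradiction, and much shorter. By Thom's theorem \cite[Theorem~3]{Thom} there is a $G$-invariant random spanning forest of $\Gamma=\Cay(G,S\cup S^{-1})$ that almost surely contains all $a^{\pm1}$-edges (this is where the infinite order of $a$ enters) and has expected vertex degree at least $2\cost(G\acts[0,1]^E)\ge 2\cost(G)\ge 5$. Averaging over $A_2$ gives a single deterministic forest $\mathcal F$ containing all $a^{\pm1}$-edges with $\sum_{g\in A_2}\deg_{\mathcal F}(g)\ge 5|A_2|$. Discarding, at each $g\in A_2$, the (at most three) edges labeled by inverses and the (at most one) $a$-labeled outgoing edge leaves at least $|A_2|$ directed edges labeled $b$ or $c$ with tails in $A_2$; adjoining the $|A_1|$ edges $(g,ga)$ for $g\in A_1$ (which lie in $\mathcal F$ automatically) yields a finite subforest on the vertex set $A_1S_1\cup A_2S_2$ with at least $|A_1|+|A_2|$ edges, whence $|A_1S_1\cup A_2S_2|>|A_1|+|A_2|-1$. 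This gives the Hall-type inequality for every pair $(A_1,A_2)$ without any tiling, mass transport, or cost-comparison argument, so I would redo your proof along these lines rather than trying to repair the contradiction route.
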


\begin{proof}
Fix a pair of finite subsets $A_1,A_2$ of $G$. By Lemma \ref{lemma} it suffices to prove that $|A_1S_1\cup A_2S_2|\ge |A_1|+|A_2|$.
Let $\Gamma=\Cay(G, S\cup S^{-1})$ be the right Cayley graph of $G$ with respect to $S\cup S^{-1}$ considered as an unoriented graph
without multiple edges. The key result we shall use is the theorem of Thom \cite[Theorem~3]{Thom} which asserts that there exists a $G$-invariant random spanning forest $\mu$ of $\Gamma$ such that $\mu$-a.s the forest contains all edges labeled by $a^{\pm 1}$ and the expected degree of a vertex in $\mu$ is at least $2 \cost(G\acts [0,1]^E)$ where $E$ is the set of edges of $\Gamma$ and the action $G\acts [0,1]^E$ is the natural action.

An immediate corollary is that there exists an ordinary
 forest $\mathcal F$ on $\Gamma$ (depending on $A_2$) such that all edges labeled by $a^{\pm 1}$ belong to $F$ and
\begin{equation}
\label{eq:forest}
\sum_{g\in A_2}\deg_{\mathcal F}(g)\geq 2 \cost(G\acts [0,1]^E)|A_2| \ge 2\cost(G)|A_2|\ge 5|A_2|.
\end{equation}
\vskip .12cm

Let $E$ be the set of all directed edges  $(g,gs)$ such that $g\in A_2$, $s\in S\cup S^{-1}$ and the unoriented edge
$\{g,gs\}$ lies in $\mathcal F$. Let $E_1$ be the subset of $E$ consisting of all edges $(g,gs)\in E$ with $s\in S\setminus S^{-1}$. Note that $|E|\geq 5|A_2|$ by \eqref{eq:forest}, and it is clear that $|E_1|\geq |E|-|S||A_2|$,
so that $|E_1|\geq 2|A_2|$.

Since the sets $S\setminus S^{-1}$ and $(S\setminus S^{-1})^{-1}$ are disjoint, $E_1$ does not contain
a pair of opposite edges. Also, the label of every edge in $E_1$ belongs to $\{a,b,c\}$. 
Thus, if $E_2$ denotes the set of edges $(g,gs)\in E_1$ such that $g\in A_2$ and $s\in \{b,c\}$, $|E_2|\ge  |E_1|-|A_2|\ge |A_2|$. 

Let $E_3$ be the set of directed edges $(g,ga)$ for $g\in A_1$. Clearly, $E_2$ and $E_3$ are disjoint sets and $E_2\cup E_3$ does not contain a pair of opposite edges.
The endpoints of edges in $E_2\cup E_3$ lie in the set $A_1S_1\cup A_2S_2$.
Let $\Lambda$ be the unoriented graph with vertex set $A_1S_1\cup A_2S_2$
and edge set $E_2\cup E_3$ (with forgotten orientation). Then $\Lambda$ is a subgraph of $\mathcal F$;
in particular $\Lambda$ is a (finite) forest. Hence
$$|A_1S_1\cup A_2S_2|=|V(\Lambda)|>|E(\Lambda)|= |E_3|+|E_2|\ge |A_1|+|A_2|,$$ as desired.
\end{proof}

\begin{Theorem}\label{Cor}
Let $F=\la a,b,c\ra$ be a free group of rank $3$. Let $r_1,r_2,\dots\in \gamma_2F$ be an enumeration of the elements of the derived subgroup of $F$. Let $R=\{r_i^{p^{n_i}}\}$ for some integer sequence $n_1,n_2,\ldots$ such that $\sum_{i}\frac{1}{p^{n_i}}\le\frac{1}{2}$. Then for $G=\la X| R\ra$, $\mathcal T(G)=5$.
\end{Theorem}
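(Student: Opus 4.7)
The plan is to establish both inequalities $\calT(G)\le 5$ and $\calT(G)\ge 5$. For the upper bound, I would apply Proposition~\ref{prop:Betti} with $S_1=\{1,a\}$ and $S_2=\{1,b,c\}$. This requires checking two hypotheses: (i) $a$ has infinite order in $G$; (ii) $\cost(G)\ge 5/2$. For the lower bound, I would rule out non-abelian free subgroups in $G$ and invoke the J\'onsson--Dekker theorem cited in the introduction.

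Hypothesis (i) is immediate: every relator $r_i^{p^{n_i}}$ lies in $\gamma_2 F=[F,F]$, so the normal closure of $R$ is contained in $[F,F]$, whence $G^{\mathrm{ab}}\cong F^{\mathrm{ab}}\cong \Z^3$. The image of $a$ generates a $\Z$-summand and hence has infinite order. For (ii), I would combine Gaboriau's estimate $\cost(G)\ge \beta_1^{(2)}(G)+1$ with the standard lower bound on first $\ell^2$-Betti numbers for groups presented by torsion relators (the same mechanism used in \cite{EGS}):
$$\beta_1^{(2)}(G)\;\ge\;d(F)-1-\sum_{i\ge 1}\frac{1}{p^{n_i}}\;\ge\;2-\tfrac{1}{2}=\tfrac{3}{2}.$$
The convergence condition $\sum 1/p^{n_i}\le 1/2$ is tailored precisely to make the right-hand side $\ge 3/2$. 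Therefore $\cost(G)\ge 5/2$, and Proposition~\ref{prop:Betti} produces a paradoxical decomposition with $|S_1|+|S_2|=5$, giving $\calT(G)\le 5$.

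For the lower bound, the key observation is that $[G,G]$ is a torsion group. Indeed, any $g\in[G,G]$ is the image of some product of commutators in $F$, hence the image of some element of $[F,F]$; since $r_1,r_2,\ldots$ enumerates the whole of $[F,F]$, there is an index $i$ with $g=\overline{r_i}$, and then $g^{p^{n_i}}=\overline{r_i^{p^{n_i}}}=1$. If $G$ contained a non-abelian free subgroup $H$, one could choose non-commuting $h_1,h_2\in H$; in the free group $H$ the commutator $[h_1,h_2]$ has infinite order, yet it lies in $[H,H]\subseteq[G,G]$ and must be $p$-power torsion, a contradiction. Hence $G$ has no non-abelian free subgroup, so by J\'onsson--Dekker, $\calT(G)\neq 4$, i.e.\ $\calT(G)\ge 5$.

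The principal obstacle is the analytic estimate $\beta_1^{(2)}(G)\ge 3/2$ (equivalently $\cost(G)\ge 5/2$) for this infinitely presented group: this is where the convergence assumption on $\sum 1/p^{n_i}$ is decisive, and it is the only non-routine input. The algebraic steps---the abelianization computation, the torsion of $[G,G]$, and the exclusion of free subgroups---are short and follow directly from the assumptions that $r_i\in[F,F]$ and that $(r_i)_i$ enumerates all of $[F,F]$.
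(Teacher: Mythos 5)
Your lower-bound half is correct and is essentially the paper's argument: $[G,G]$ is torsion because every element of $[F,F]$ occurs as some $r_i$, so $G$ is torsion-by-abelian, has no non-abelian free subgroup, and J\'onsson--Dekker gives $\calT(G)\neq 4$. The gap is in the upper bound, precisely at the step you yourself flag as the ``only non-routine input''. The inequality $\beta_1^{(2)}(G)\ge d(F)-1-\sum_i 1/p^{n_i}$ is not a standard fact for this group: the Peterson--Thom-type estimate for presentations with relators $r_i^{k_i}$ bounds the contribution of each relator by $1/m_i$, where $m_i$ is the \emph{order of the image of $r_i$ in $G$}, not by $1/k_i$. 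Here the $r_i$ enumerate all of $\gamma_2F$, so the relations interact massively: for example if $r_j=r_i^2$ and $n_j<n_i$, then the image of $r_i$ already has order dividing $p^{n_j}<p^{n_i}$, so the relevant sum $\sum_i 1/m_i$ can be much larger than $\sum_i 1/p^{n_i}$ (it need not even converge), and your bound $\beta_1^{(2)}(G)\ge 3/2$ is unjustified. This is exactly why the paper does not work with $G$ directly.

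The paper's route is: invoke \cite[Theorem B.1]{EGS}, which (via the images of the finitely presented approximations $G(m)=\la X|R_m\ra$ in their pro-$p$ completions) produces only a \emph{quotient} $Q$ of $G$ with $\beta_1(Q)\ge 1.5$; verify separately, by extending the abelianization map $F\to\mathbb{Z}^3$ to the pro-$p$ completions, that $Q$ can be chosen so that the image of $a$ has infinite order (your abelianization argument shows this for $G$, but what is needed is the statement for $Q$, and that requires the pro-$p$ argument); apply Proposition \ref{prop:Betti} to $Q$ using $\cost(Q)\ge\beta_1(Q)+1\ge 2.5$; and finally use monotonicity of Tarski numbers under quotients, $\calT(G)\le\calT(Q)\le 5$, a step your outline omits because you assumed the Betti-number bound for $G$ itself. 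To repair your proof you would either have to control the orders of the images of all $r_i$ in $G$ (which is not available) or restructure it along the paper's quotient-plus-monotonicity lines.
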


\begin{proof}
By \cite[Theorem B.1]{EGS} $G$ has a quotient $Q$ such that $\beta_1(Q)\ge 1.5$ where $\beta_1(Q)$ is the first $L^2$-Betti number of $Q$. In addition, $Q$ can be chosen so that the image of $a$ in $Q$ has infinite order. Indeed, as in the proof of \cite[Theorem B.1]{EGS}, let $R_m=\{r_i^{p^{n_i}}\}_{i=1}^m$, $G(m)=\la X|R_m\ra$ and $G(m)_p$ be the image of $G(m)$ in its pro-$p$ completion. Then, if $G(m)_p=F/N_m$, by the argument in \cite{EGS}, for $Q=F/\bigcup_{m\in\mathbb{N}}N_m$, $\beta_1(Q)\ge 1.5$. Assume by contradiction that the image of $a$ in $Q$ has finite order. Then, for some $m\in\mathbb{N}$, the image of $a$ in $G(m)_p$ is also of finite order. Let $Ab\colon F\to \mathbb{Z}^3$ be the abelianization homomorphism. Since $R_m\sub \gamma_2F$, $Ab$ induces a homomorphism $Ab\colon G(m)\to \mathbb{Z}^3$. Let $i_{\mathbb{Z}^3}\colon \mathbb{Z}^3\to\widehat{\mathbb{Z}^3}_p$ and $i_{G(m)}\colon G(m)\to \widehat {G(m)}_p$ be the natural homomorphisms from $\mathbb{Z}^3$ and $G(m)$ to their pro-$p$ completions. Then, $\phi=i_{\mathbb{Z}^3}\circ Ab\colon G(m)\to \widehat{Z^3}_p$ is a homomorphism from $G(m)$ to a pro-$p$ group. Clearly, $\phi$ is continuous when $G(m)$ is equipped with the pro-$p$ topology. Thus, it can be extended in a unique way to a homomorphism $\psi \colon \widehat{G(m)}_p\to \widehat{Z^3}_p$ for which $\psi \circ i_{G(m)}=\phi$. Since the image of $a$ under $\phi$ has infinite order in $\widehat{Z^3}_p$, $i_{G(m)}(a)$ must have an infinite order in $G(m)_p\sub\widehat{G(m)}_p$, a contradiction.  

Now we are ready to prove the theorem. For a quotient $Q$ with the properties mentioned above, $\cost(Q)\ge \beta_1(Q)+1\ge 2.5$. Thus, Proposition \ref{prop:Betti} applied to $Q$ implies that $\mathcal T(Q)\le 5$. Since $Q$ is a quotient of $G$, $\mathcal T(G)\le \mathcal T(Q)\le 5$ (see, for example, \cite[Theorem 5.8.16]{Sa}). Since $G$ is torsion-by-abelian it doesn't contain any free non abelian subgroup. Hence, by the theorem of J\'onsson and Dekker mentioned above $\mathcal T(G)\neq 4$. Therefore $\mathcal T(G)=5$ as required. 
\end{proof}

\end{document}